\documentclass[12pt]{article}

\usepackage{amssymb}%
\usepackage{amsmath}%
\usepackage{amsthm}%
\usepackage{hyperref}%
\usepackage{graphicx}%
\usepackage{xcolor}%
\usepackage{mathrsfs}

\allowdisplaybreaks%

{\theoremstyle{plain}%
 \newtheorem{theorem}{Theorem}
 
 \newtheorem{lemma}{Lemma}%
}
{\theoremstyle{remark}

}
{\theoremstyle{definition}
\newtheorem{definition}{Definition}
\newtheorem{example}{Example}
}

\begin{document}

\begin{center}
{\large Subword enumeration up to stack-sorting equivalence}

 \ 

{\textsc{John M. Campbell}} \ \ \ {\textsc{Narad Rampersad}} 
 
 \ 

\end{center}

\begin{abstract}
 Defant and Kravitz introduced generalizations of West's stack-sorting map $s$ from permutations to finite words. This raises questions 
 as to how such generalizations could be applied in the field of combinatorics on words. The Defant--Kravitz generalizations of $s$
 depend on how repeated occurrences of the same character within a word may be repositioned, according to their $\textsf{tortoise}$
 and $\textsf{hare}$ operations. As demonstrated in this paper, these operations provide a natural way of extending abelian complexity 
 functions for infinite sequences, in a way that gives light to structural properties associated with infinite words. We apply these new
 ideas to two famous infinite words: the paperfolding word and the Thue--Morse word. In the case of the Thue--Morse word, we discover
 an interesting connection to the previous work of several authors, such as de Luca and Varricchio, on the ``special'' factors of the
 Thue--Morse word. This may be seen as providing a basis for a new and interdisciplinary area linking the combinatorics about the 
 stack-sorting of permutations with the field of combinatorics on words.
\end{abstract}

\vspace{0.1in}

\noindent {\footnotesize \emph{MSC:} 05A05, 68R15, 11B85}

\vspace{0.1in}

\noindent {\footnotesize \emph{Keywords:} subword, complexity function, permutation, stack sorting, partial sorting algorithm, 
 West's stack-sorting map, automatic sequence, paperfolding sequence, Thue--Morse sequence, {\tt Walnut}, $k$-abelian complexity}

\section{Introduction}
 What is referred to as \emph{West's stack-sorting map} traces back to Knuth's classic texts on computer programming 
 \cite[\S2.2.1]{Knuth1973} and was given, in an equivalent way, in 1990 in a seminal PhD thesis \cite{West1990}. Since partial sorting 
 algorithms on permutations helped to lead toward areas in combinatorics devoted to permutation patterns, this, in turn, motivates the 
 extension of such algorithms to combinatorial objects generalizing permutations. In this paper, we show how generalizations due 
 to Defant and Kravitz \cite{DefantKravitz2020} of West's stack-sorting map can be applied in the study of structural and enumerative 
 properties of automatic sequences and in the context of combinatorics on words. 

 For a permutation $\sigma$ of $\{ 1, 2, \ldots, n \}$, let $\sigma$ be written as a word in $\{ 1, 2, \ldots, n \}^{\ast}$ so that 
\begin{equation}\label{sigmaLnR}
 \sigma = L n R
\end{equation}
 for possibly empty subwords $L$ and $R$. West's stack-sorting map $s$ may be defined so that $s$ maps the empty permutation to 
 the same permutation and so that 
\begin{equation}\label{srec}
 s(\sigma) = s(L) s(R) n. 
\end{equation}
 From the recursion in \eqref{srec}, we see that $s$ is defined only on words consisting of distinct and positive integers. This raises 
 questions as to how the recurrence relation in \eqref{srec} could be generalized so as to define stack-sorting operations on words 
 with possibly repeated characters, and Defant and Kravitz \cite{DefantKravitz2020} were the first to introduce and apply such 
 generalizations. In view of research works subsequent and related to the Defant--Kravitz generalizations of $s$ \cite{Cerbai2021,Defant2020Counting,Defant2020Descents,Defant2020FertilityAustralas,Defant2020FertilityJ,Defant2020Polyurethane,Defant2019,DefantEngenMiller2020,DefantKravitz2024}, 
 it appears that these generalizations have not been applied directly in the study of automatic sequences or in terms of complexity 
 functions on infinite words/sequences. This forms the main purpose of our paper. 

 As in the work of Defant and Kravitz \cite{DefantKravitz2020}, we express how the extension of $s$ from permutations to words is 
 inspired by many past research efforts on the extension of the study of pattern avoidance in permutations to pattern avoidance in words, 
 together with past research on sorting procedures on words. Ideally, through new applications of and investigations devoted to the 
 application of sorting procedures on finite subwords of automatic sequences, this could lead toward a deeper understanding 
 of aspects about such subwords in terms of, say, growth properties when it comes to the enumeration of subwords up to a given point 
 in an automatic sequence, or in terms of, for example,
 Morse--Hedlund-type results (cf.\ \cite{MorseHedlund1940}) characterizing eventual 
 periodicity or periodicity-like conditions in terms of the behavior of complexity functions. 

 For a given sequence $\text{{\bf x}}$, it is of interest in its own right how equal-length subwords of $\text{{\bf x}}$ relate to one another
 in terms of partial sortings of or repositionings of characters. The notion of stack-sorting equivalence on subwords introduced in this 
 paper helps to formalize this idea and provides applications based on this idea to the automatic sequences known as the paperfolding 
 sequence and the Thue--Morse sequence. 

\section{Background}
 For the empty word $\varepsilon$, set $\textsf{hare}(\varepsilon) = \textsf{tortoise}(\varepsilon) = \varepsilon$, as in the work of Defant 
 and Kravitz \cite{DefantKravitz2020}. For a nonempty word $w$, let $n$ be the largest character appearing in $w$, 
 and suppose that this maximal character appears $k$ times, writing 
\begin{equation}\label{wAnAn}
 w = A_1 n A_2 n \cdots n A_{k+1}
\end{equation}
 by direct analogy with \eqref{sigmaLnR}, and with the understanding that every character appearing in $A_i$ (for $i \in \{ 1, 2, \ldots, k + 
 1 \}$) does not exceed $n-1$, and with the understanding that $A_i$ may or may not be $\varepsilon$. 
 Defant and Kravitz then defined the operator $\textsf{hare}$ recursively so that 
\begin{equation}\label{harew}
\textsf{hare}(w) = \textsf{hare}(A_{1}) \, \textsf{hare}(A_2) 
 \, \cdots \, \textsf{hare}(A_{k+1}) \, \underbrace{nn\cdots{n}}_{k}. 
\end{equation}

\begin{example}
 Define the word $w$ in $\{ 0, 1, \ldots, 9 \}^{\ast}$ so that 
\begin{equation}\label{piw}
 w = 314159265358979323846264. 
\end{equation}
 We then have that 
 $$ \textsf{hare}(w) = \textsf{hare}(31415) \, 
 \textsf{hare}(265358) \, \textsf{hare}(7) \, \textsf{hare}(323846264) \, 999, $$
 and the iterative application of \eqref{harew} gives us 
\begin{equation}\label{finalpihare}
 \textsf{hare}(w) = 131452355687233424668999. 
\end{equation}
\end{example}

 Informally, and in contrast to the recursion for $s$ based on the decomposition in \eqref{sigmaLnR}, there is a matter of ambiguity when 
 it comes to how the $n$-characters in \eqref{wAnAn} should be repositioned, 
 and/or which such $n$-characters should be repositioned, e.g., in the context of a given application. 
 This leads us to Defant and Kravitz's alternative to the $\textsf{hare}$ operator, writing 
\begin{multline*}
 \textsf{tortoise}(w) = \textsf{tortoise}(A_1) \, \textsf{tortoise}(A_2) \, n \, \textsf{tortoise}(A_3) 
 \, n \, \cdots \\ \, n \, \textsf{tortoise}(A_k) \, n \, \textsf{tortoise}(A_{k+1}) \, n. 
\end{multline*}
 The terminology associated with the mappings
 $\textsf{hare}$ and $\textsf{tortoise}$ may be thought of as, informally, referring to 
 the ``speeds'' of the respective mappings in terms of 
 obtaining a lexicographically sorted word
 via iterative applications. 

\begin{example}
 For the word $w$ in \eqref{piw}, we find that 
$$ \textsf{tortoise}(w) = 131452355689792334246689. $$
\end{example}

 If $b$ is a binary word, then 
\begin{equation}\label{equivbin}
 \textsf{hare}(b) = \operatorname{sort}(b), 
\end{equation}
 for the lexicographically sorted version $\operatorname{sort}(b)$ of $b$. So, for an infinite and binary sequence $\text{{\bf x}}$, if 
 we enumerate subwords of $\text{{\bf x}}$ of a given length $k$ and up to the equivalence relation
 whereby two finite words are equivalent if the images of these words under 
 $\textsf{hare}$ are equal, 
 then this is equivalent to enumerating length-$k$ subwords of $\text{{\bf x}}$ up to equivalence by permutations of finite 
 subwords. This illustrates how the Defant--Kravitz operators $\textsf{hare}$ and $\textsf{tortoise}$ 
 provide natural generalizations 
 of the concept of an \emph{abelian complexity function} reviewed below 
 and could form a bridge between 
 the study of stack sorting and field of \emph{abelian combinatorics on words} reviewed below. 
 It seems that this connection has not been considered previously, with reference to the Fici--Puzynina
 survey on abelian combinatorics on words \cite{FiciPuzynina2023}, 
 and again with reference to 
 extant research works related to Defant and Kravitz's research on stack sorting on words 
 \cite{Cerbai2021,Defant2020Counting,Defant2020Descents,Defant2020FertilityAustralas,Defant2020FertilityJ,Defant2020Polyurethane,Defant2019,DefantEngenMiller2020,DefantKravitz2024}. 

\subsection{Abelian combinatorics on words}
 Let $\text{{\bf x}}$ denote an infinite sequence over a finite alphabet. The associated \emph{factor complexity} or \emph{subword 
 complexity function}
\begin{equation}\label{displaycomplexity}
 \rho_{\text{{\bf x}}}\colon \mathbb{N}_{0} \to \mathbb{N}
\end{equation}
 maps a domain element $n \in \mathbb{N}_{0}$ to the number of length-$n$ factors or subwords appearing within $\text{{\bf x}}$ 
 (when written as an infinite word). 
 Naturally, the enumeration of subwords provides a central part of combinatorics on words. 
 This points toward the significance of the roles played by natural variants of complexity functions as in 
 \eqref{displaycomplexity}. In this regard, 
 counting subwords up to rearrangements of characters
 is at the core of abelian combinatorics on words, 
 again with reference to the Fici--Puzynina survey on this discipline \cite{FiciPuzynina2023}. 

 The \emph{abelian complexity function}
\begin{equation}\label{abcomdisplay}
 \rho_{\text{{\bf x}}}^{\operatorname{ab}}\colon \mathbb{N}_{0} \to \mathbb{N} 
\end{equation}
 is defined to map $n \in \mathbb{N}_{0}$ to the number, up to equivalence by (possibly trivial) permutations on finite subwords, of 
 length-$n$ factors of $\text{{\bf x}}$. Complexity functions as in \eqref{abcomdisplay} may be seen as providing a prototypical way of 
 enumerating fixed-length subwords up to what is typically meant by abelian equivalence (cf.\ \cite{FiciPuzynina2023}). However, 
 complexity functions as in \eqref{abcomdisplay} scratch the surface when it comes to the entire field of abelian combinatorics, 
 with regard to Fici and Puzynina's exposition on abelian powers, abelian critical exponents, abelian square factors, abelian antipowers, 
 generalizations of abelian powers, abelian pattern avoidance, abelian periodicity theorems, abelian primitive words, abelian borders, 
 $k$-abelian equivalence, weak abelian equivalence, and many further topics \cite{FiciPuzynina2023}. Since stack-sorting procedures on 
 words are not considered in Fici and Puzynina's survey, this suggests that generalizing abelian complexity functions via 
 the Defant--Kravitz $\textsf{tortoise}$ and $\textsf{hare}$ operations has not been considered
 in any equivalent way before. 

\subsection{The complexity of automatic sequences}
 For a full background on automatic sequences, we cite the standard text on automatic sequences \cite{AlloucheShallit2003}. Informally, 
 the inherently recursive nature of automatic sequences can be thought of as reflecting how the study of subwords of such sequences 
 helps to give light to the nature of such sequences and the applications of such sequences within number theory and computer science. 
 This is reflected in a number of recent, notable research publications concerning complexity functions on automatic sequences, as in 
 the work of Golafshan et al.\ on the $k$-binomial complexity for generalized Thue--Morse sequences \cite{GolafshanRigoWhiteland2026}, 
 the work of L\"u et al.\ on the 2-binomial complexity of generalized Thue--Morse sequences \cite{LuChenWenWu2024}, 
 the work of Shallit on the cyclic complexity of the Thue--Morse sequence \cite{Shallit2023Proof}, 
 the work of Rampersad on the periodic complexity of the Thue--Morse sequence, the Rudin--Shapiro sequence, 
 and the period-doubling sequence \cite{Rampersad2023}, 
 and the work of Lejeune et al.\ on the $k$-binomial complexity of the Thue--Morse sequence \cite{LejeuneLeroyRigo2020}. 

 Two of the most basic and prototypical instances of non-periodic automatic sequences are the Thue--Morse sequence and the 
 paperfolding sequence. While it is well-known that the abelian complexity of the Thue--Morse sequence is $2$-periodic, 
 the abelian complexity function of the paperfolding sequence
 is non-periodic and $2$-regular, as proved by Madill and Rampersad \cite{MadillRampersad2013}. 
 This motivates the material in Section \ref{secpaperstack} below
 on a new, abelian-like complexity function applied to the paperfolding sequence. 
 In Section \ref{secTMmain}, 
 we investigate this abelian-like complexity function 
 as applied to the TM sequence, uncovering an unexpected connection with the work 
 of de Luca and Varricchio \cite{deLucaVarricchio1989} 
 concerning distinguished factors of the TM sequence. 

 From the equivalence in \eqref{equivbin} for a given binary word $b$, we restrict our attention to how Defant and Kravitz's 
 $\textsf{tortoise}$ operation can be used to extend abelian complexity functions, and this leads us to Section \ref{sectTortcomp} below. 

\section{Tortoise complexity}\label{sectTortcomp}
 Adopting notation from Defant and Kravitz \cite{DefantKravitz2020}, 
 we let $\langle w \rangle_{\textsf{tortoise}}$ denote
	 the smallest integer such that 
\begin{equation}\label{tortsort}
\textsf{tortoise}^{\left( \langle w \rangle_{\textsf{tortoise}} \right)}(w) = \operatorname{sort}(w). 
\end{equation}
 So, letting 
 $\text{{\bf x}}$ be an infinite sequence, we have that $\rho_{\text{{\bf x}}}^{\operatorname{ab}}(n)$ is equal to the number of 
 length-$n$ subwords of $\text{{\bf x}}$, up to the equivalence relation whereby two words $w$ and $v$ 
 are considered to be equivalent if 
 $\textsf{tortoise}^{\left( \langle w \rangle_{\textsf{tortoise}} \right)}(w) = \textsf{tortoise}^{\left( 
 \langle v \rangle_{\textsf{tortoise}} \right)}(v)$. 
 So, if we instead let, for a fixed parameter $k$, words 
 $w$ and $v$ be considered to be equivalent if 
\begin{equation}\label{tortoisek}
 \textsf{tortoise}^{(k)}(w) = \textsf{tortoise}^{(k)}(v),
\end{equation}
 then, informally, the corresponding complexity functions can be thought of as getting ``closer and closer'' to 
 $\rho_{\text{{\bf x}}}^{\operatorname{ab}}$ as $k$ grows larger and larger, in a way that can be thought of as capturing the extent 
 of ``abelianness,'' in a way that depends on $k$. 
 This may be seen as a natural variant of the notion of $k$-abelian equivalence, 
 which  refers to   $w$ and $v$    
 starting  with the same prefix of length $k-1$ and  ending
  with the same suffix of length $k-1$,  with the assumption that 
   the number of occurrences of $x$ in $w$ equals the 
 number of occurrences of $x$ in $v$ for all words $x \in A^{\ast}$
 such that $\ell(x) = k$. 

 The notion of $k$-abelian complexity traces back to the work of Karhum\"aki in 1980 on generalized {P}arikh mappings 
 \cite{Karhumaki1980}. In the work of Chen et al.\ concerning the $k$-abelian complexity of automatic sequences \cite{ChenLuWu2018}, 
 it is noted that $k$-abelian complexity is a measure of the disorder of a given sequence. Our notion of $k$-tortoise complexity 
 suggested in \eqref{tortoisek} and defined below measures the disorder of a given sequence in a different way and in what may be seen 
 as a more natural way. For the case of a binary word $w$ with at least $k$ characters equal to $1$, the application of 
 $\textsf{tortoise}^{(k)}$ to $w$ has the effect of taking the first $k$ occurrences of $1$-characters in $w$ and repositioning all $k$ of 
 these $1$-characters at the end of $w$. 
 This may be seen as a useful and natural operation in the context of abelian combinatorics on words, 
 in view of the many operations related to abelian complexity functions
 covered in the Fici--Puzynina
 survey \cite{FiciPuzynina2023}. 
 
\begin{definition}\label{ktorteq}
 Let $k$ be a positive integer. Two words $w$ and $v$ (over a totally ordered alphabet) are \emph{$k$-tortoise-equivalent}, writing 
 $w \sim_{\textsf{t}^{(k)}} v$, if \eqref{tortoisek} holds. For the $k = 1$ case, we let $k$-tortoise equivalence be referred to as 
 \emph{tortoise equivalence}, and, in this case, we write $ \sim_{\textsf{t}} $ in place of $\sim_{\textsf{t}^{(k)}}$. 
\end{definition}

\begin{example}
 If we take the word
\begin{equation*}
 w = 001100011011100, 
\end{equation*}
 which is a subword of the paperfolding word defined in Section \ref{secpaperstack} below, 
 and define
 $$ v = 010100011011100, $$
 which is not a subword of the same paperfolding word, 
 then we find that 
 $w \sim_{\textsf{t}} v$, since 
 $$ \textsf{tortoise}(w) = \textsf{tortoise}(v) = 001000110111001. $$
\end{example}
 
 The complexity function in Definition \ref{definektort} provides a main object of investigation in this paper. 

\begin{definition}\label{definektort}
 Let $k$ be a positive integer, and let $\text{{\bf x}}$ be a sequence over a finite alphabet. 
 Define the \emph{$k$-tortoise complexity function}
\begin{equation}\label{displayrhotort}
 \rho_{\text{{\bf x}}}^{\textsf{t}^{(k)}}\colon \mathbb{N}_{0} \to \mathbb{N}
\end{equation}
 so that \eqref{displayrhotort} maps $n \in \mathbb{N}_{0}$ to the number, up to equivalence under $\sim_{\textsf{t}^{(k)}}$, of
 length-$n$ subwords of $\text{{\bf x}}$. For the $k = 1$ case, 
 we refer to $k$-tortoise complexity as \emph{tortoise complexity}, 
 and we rewrite $ \rho_{\text{{\bf x}}}^{\textsf{t}^{(k)}}$ as $ \rho_{\text{{\bf x}}}^{\textsf{t}}$.
\end{definition}

 For a given sequence $\text{{\bf x}}$ over a finite alphabet, and for a fixed parameter $k$, the integer sequences
\begin{equation}\label{compareseq}
 \left( \rho_{\text{{\bf x}}}^{\textsf{t}^{(k)}}(n) : n \in \mathbb{N}_{0} \right) 
 \ \ \ \text{and} \ \ \ \left( \rho_{\text{{\bf x}}}^{\operatorname{ab}}(n) : n \in \mathbb{N}_{0} \right) 
\end{equation}
 will either be equal or will agree up to a certain point. 
 Disregarding the degenerate case whereby the sequences in 
 \eqref{compareseq} agree completely or agree after a certain point
 (as would be the case if $\text{{\bf x}}$ is a constant sequence), 
 let $\operatorname{abel}_{\text{{\bf x}}}(k)$ denote the unique integer such that 
 $$ \forall n \leq \operatorname{abel}_{\text{{\bf x}}}(k) \ 
 \rho_{\text{{\bf x}}}^{\textsf{t}^{(k)}}(n) = \rho_{\text{{\bf x}}}^{\operatorname{ab}}(n) $$
 and such that 
\begin{equation}\label{guarantee}
 \rho_{\text{{\bf x}}}^{\textsf{t}^{(k)}}(\operatorname{abel}_{\text{{\bf x}}}(k)+1) 
 \neq \rho_{\text{{\bf x}}}^{\operatorname{ab}}(\operatorname{abel}_{\text{{\bf x}}}(k)+1). 
\end{equation}
 As suggested above, the statistic $\operatorname{abel}_{\text{{\bf x}}}(k)$ can be thought of as 
 quantifying ``how abelian'' a sequence is 
 relative to the effect of a $k$-fold application of $\textsf{tortoise}$. 

 Suppose that the sequences in \eqref{compareseq} do not agree after some point, for all positive integers $k$, and 
 again for a sequence $\text{{\bf x}}$ over a finite alphabet. 
 Then, for all $N \in \mathbb{N}$, it can be shown that there exists a value $K$ such that 
\begin{equation}\label{desireabel}
 \operatorname{abel}_{\text{{\bf x}}}(K) \geq N.
\end{equation} 
 This raises questions as to how Defant and Kravitz's results concerning expressions of the form $\langle w \rangle_{\textsf{tortoise}}$ 
 \cite{DefantKravitz2020} could be applied in the estimation of or evaluation of 
 expressions as in $\operatorname{abel}_{\text{{\bf x}}}(k)$. 
 For the purposes of this paper, 
 we investigate the problem 
 of evaluating tortoise complexity functions explicitly 
 on automatic sequences, 
 and we return to the problem of evaluating 
 $\operatorname{abel}_{\text{{\bf x}}}(k)$ in Sections \ref{secpaperstack} and \ref{secConclusion} below. 

\section{Tortoise complexity and left special factors}
A crucial tool in the study of subword complexity functions is the notion
of ``special'' subwords. If ${\bf x}$ is an infinite word then a subword $w$ of ${\bf x}$
is \emph{right special} (sometimes just \emph{special}) if there are distinct
letters of the alphabet $a$ and $b$ such that both $wa$ and $wb$ are subwords of
${\bf x}$. Similarly, if both $aw$ and $bw$ are subwords of ${\bf x}$, then
$w$ is \emph{left special}. Left special subwords are also relevant to the computation
of $\rho_{\bf x}^{\textsf{t}}(n)$ in the following way:

\begin{lemma}\label{lem_left_sp}
 Let $w$ and $w'$ be distinct equal-length subwords of an infinite binary word ${\bf x}$ such that
 $w \sim_{\emph{\textsf{t}}} w'$. Suppose $w$ and $w'$ each contain at least one $1$ and write $w=uv$ and $w'=u'v$,
 where both $u$ and $u'$ contain exactly one $1$. Then $zv$ is a left special subword of ${\bf x}$, where $z$ is the longest common
 suffix of $u$ and $u'$.
\end{lemma}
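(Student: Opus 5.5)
The plan is to compute $\textsf{tortoise}$ explicitly on binary words and then read off the left special factor directly. First I would specialize the recursive definition of $\textsf{tortoise}$ to a binary word $w$ containing at least one $1$. In the decomposition \eqref{wAnAn} the maximal letter is $n=1$, and each $A_i$ is a (possibly empty) block of $0$'s. Applying $\textsf{tortoise}$ to a word of $0$'s leaves it unchanged. So if $w = A_1 1 A_2 1 \cdots 1 A_{k+1}$, then
$$\textsf{tortoise}(w) = A_1 A_2 1 A_3 1 \cdots 1 A_{k+1} 1 .$$
In words, $\textsf{tortoise}(w)$ is obtained from $w$ by deleting the first occurrence of $1$ and appending a $1$ at the end. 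This matches the informal description given before Definition \ref{ktorteq}. Consequently, for binary words $w,w'$ that each contain a $1$, we have $w\sim_{\textsf{t}} w'$ if and only if deleting the first $1$ from $w$ and from $w'$ yields the same word.

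Next I would use the hypothesis $w=uv$, $w'=u'v$, where $u$ and $u'$ each contain exactly one $1$. Since $|w|=|w'|$, we get $|u|=|u'|$. Write $u=0^{a}10^{c}$ and $u'=0^{a'}10^{c'}$. The first $1$ of $w$ is the unique $1$ in $u$, and likewise for $w'$. Deleting it gives $0^{a+c}v$ and $0^{a'+c'}v$ respectively. These are equal by the characterization above, which is consistent with $a+c=a'+c'$. Because $w\neq w'$ we must have $u\neq u'$, hence $a\neq a'$. Without loss of generality $a<a'$, which forces $c>c'$.

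Now I would identify $z$. The words $u=0^a10^c$ and $u'=0^{a'}10^{c'}$ both end in $0^{c'}$. The next letter to the left is $1$ in $u'$. In $u$ it is $0$, because $c>c'$ means position $c'+1$ from the right of $u$ still lies inside the block $0^c$. Hence the longest common suffix is $z=0^{c'}$. Moreover, $0z$ is a suffix of $u$ and $1z$ is a suffix of $u'$. Therefore $0zv$ is a factor of $w$ and $1zv$ is a factor of $w'$, and both are factors of $\mathbf{x}$. This shows $zv$ is left special.

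The only step needing care is the first one: verifying that the recursive definition really reduces to "move the first $1$ to the end" on binary words. This includes the boundary cases $k=1$, where only $A_1$ and $A_2$ appear, and the case where some $A_i$ are empty. Once that is settled, the remaining steps are direct bookkeeping.
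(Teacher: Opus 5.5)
Your proof is correct and is essentially the paper's argument: the distinct, equal-length words $u,u'$ have a longest common suffix $z$ that is preceded by different letters in $u$ and $u'$, so $0zv$ and $1zv$ both occur in ${\bf x}$. Writing $u=0^a10^c$ and $u'=0^{a'}10^{c'}$ just makes this explicit, and your preliminary computation of $\textsf{tortoise}$ on binary words, though correct, is not needed, since the argument only uses $|u|=|u'|$ and $u\neq u'$, which follow from $|w|=|w'|$ and $w\neq w'$.
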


\begin{proof}
Since $z$ is the longest common suffix of $u$ and $u'$, it is preceded by different letters
in $u$ and $u'$; hence, $zv$ is left special.
\end{proof}

  Consider an infinite binary word ${\bf x}$ where the $1$'s occur with   \emph{bounded gaps}; i.e., there exists a constant $B$ such that  
 every factor of ${\bf x}$ of length $B$ contains a $1$.

\begin{theorem}
 Let ${\bf x}$ be an infinite binary word such that $\rho_{\bf x}(n) \in O(n)$ and the $1$'s in ${\bf x}$ occur with bounded gaps. 
 There exists a constant $C$ such that
$$ \rho_{\bf x}(n) - C \leq \rho_{\bf x}^{\emph{\textsf{t}}}(n) \leq \rho_{\bf x}(n) $$
for all sufficiently large $n$.
\end{theorem}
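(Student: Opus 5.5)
The upper bound $\rho^{\textsf{t}}_{\bf x}(n)\le \rho_{\bf x}(n)$ is immediate: $\sim_{\textsf{t}}$ partitions the set of length-$n$ subwords, so there are at most $\rho_{\bf x}(n)$ classes. For the lower bound I will write
$$\rho_{\bf x}(n)-\rho^{\textsf{t}}_{\bf x}(n)=\sum_{\mathcal{K}}\bigl(|\mathcal{K}|-1\bigr),$$
where $\mathcal{K}$ runs over the $\sim_{\textsf{t}}$-classes of length-$n$ subwords. I then bound separately the size of each class and the number of nontrivial classes, each by a constant independent of $n$.

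\textbf{Step 1: describe $\textsf{tortoise}$ on binary words.} If $w$ contains at least one $1$, write $w=0^{a}1\,A_2\,1\cdots$, where all blocks $A_i$ consist of $0$'s. The recursive definition gives that $\textsf{tortoise}(w)$ is $w$ with its first $1$ deleted and a $1$ appended at the end. Now take $n\ge B$, so that every length-$n$ subword contains a $1$. Write $w=uv$ with $u=0^a10^b$ and $v$ either empty or beginning with $1$. Then $w\sim_{\textsf{t}} w'$ holds exactly when $w'=0^{a'}10^{b'}v$ with $a'+b'=a+b$. Since $a+b+1\le B$ by bounded gaps, every class has at most $B$ elements.

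\textbf{Step 2: bound the number of nontrivial classes.} Let $\mathcal{K}$ be a class with two distinct members $w,w'$, decomposed as in Step 1 (this is the decomposition of Lemma~\ref{lem_left_sp}). Lemma~\ref{lem_left_sp} gives that $zv$ is left special, where $z$ is the longest common suffix of $u,u'$. Since $|u|\le B$, the word $zv$ is a suffix of $w$ of some length $m\in[n-B,n]$. Conversely, $w$ is determined by $zv$ together with a prefix of length at most $B$, so at most $(B+1)2^B$ classes map to the same left special word. Hence the number of nontrivial classes is at most
$$(B+1)2^B\sum_{m=n-B}^{n} s(m),$$
where $s(m)$ is the number of left special factors of length $m$.

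\textbf{Step 3 (the main obstacle): bound $s(m)$ uniformly.} For a binary word, $s(m)=\rho_{\bf x}(m+1)-\rho_{\bf x}(m)$, because each length-$m$ factor has at most two left extensions. The hypothesis $\rho_{\bf x}(n)\in O(n)$ then lets me invoke Cassaigne's theorem, which says that linear complexity forces bounded first differences: $\rho_{\bf x}(m+1)-\rho_{\bf x}(m)\le K$ for some constant $K$. This external result is the key input I expect to rely on, since the rest is elementary.

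Combining the three steps,
$$\rho_{\bf x}(n)-\rho^{\textsf{t}}_{\bf x}(n)\le (B-1)(B+1)^2 2^B K=:C$$
for all $n\ge B$, which proves the theorem.
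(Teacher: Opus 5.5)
Your proposal is correct and follows essentially the same route as the paper: bounded gaps bound the size of each $\sim_{\textsf{t}}$-class, Lemma~\ref{lem_left_sp} attaches each nontrivial class to a left special factor whose length is within a bounded distance of $n$, and Cassaigne's theorem bounds the number of left special factors of each length.

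Two slips affect only the value of $C$. First, bounded gaps give $a\le B-1$ and $b\le B-1$ separately, so $|u|=a+b+1$ can be as large as $2B-1$ (e.g.\ in $(0^{B-1}1)^\omega$). The range of $m$ and the prefix lengths in Step~2 must therefore be enlarged accordingly. The class-size bound $B$ still holds, but the reason is that $a'\in\{0,\dots,B-1\}$ determines $w'$, not that $a+b+1\le B$. Second, $s(m)=\rho_{\bf x}(m+1)-\rho_{\bf x}(m)$ fails when the length-$m$ prefix of ${\bf x}$ occurs only once, since it then has no left extension. In that case $s(m)$ is one larger, exactly as the paper notes, so you should use $K+1$ in place of $K$.
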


\begin{proof}
 Consider a $\sim_{\textsf{t}}$-equivalence class of ${\bf x}$ containing at least two distinct words $w$ and $w'$ of length $n$. Suppose 
 further that $w$ and $w'$ contain at least one $1$. Applying Lemma~\ref{lem_left_sp},  write $w=xy$ and $w'=x'y$, where $y$ is left 
 special and $x$ and $x'$ each
contain exactly one $1$. Let $|y|=m$. Note that since the $1$'s in ${\bf x}$
occur with bounded gaps, there is a bounded number of choices for $x$ and $x'$,
which implies that there is a bounded number of possibilities for $m$.

 Now, note that the number of left special factors of length $m$ is equal to $\rho_{\bf x}(m+1) - \rho_{\bf x}(m)$ if the prefix of ${\bf x}$ 
 of length $m$ occurs at least twice in ${\bf x}$ and is equal to $\rho_{\bf x}(m+1) - \rho_{\bf x}(m) + 1$
otherwise. A deep result of Cassaigne~\cite{Cassaigne1996} states that $\rho_{\bf x}(m) \in O(m)$
if and only if there is a constant $D$ such that $\rho_{\bf x}(m+1) - \rho_{\bf x}(m) \leq D$
for all $m$. Hence, not only is there a bounded number of possibilities for 
the length of the left special factor $y$, but for each possible length there are
at most $D+1$ choices for $y$.

 The conclusion is that there is a bounded number of $\sim_{\textsf{t}}$-equivalence classes of ${\bf x}$ that contain more than one 
 element, and furthermore, the size of such classes is bounded as well. This is sufficient to establish the claim.
\end{proof}

Much is known about the right special subwords of the Thue--Morse word ${\bf t}$.
It is well-known that if $w$ is a subword of ${\bf t}$ then so is the \emph{reversal}
of $w$, denoted $w^R$. It follows that if $w$ is a right special subword of ${\bf t}$
then $w^R$ is a left special subword of ${\bf t}$. De Luca and Varricchio 
gave a formula for the number of right special factors of ${\bf t}$
of length $n$. The same applies to left special factors by our earlier observation,
so from~\cite[Lemma~4.3]{deLucaVarricchio1989} we get the following fact.

\begin{lemma}\label{lem_dLV}
For any $n\geq 2$, the number of left special factors of ${\bf t}$
of length $n$ is
\[
\begin{cases}
4 &\text{if } n\leq 3\cdot 2^{\lfloor \log_2(n-1) \rfloor - 1},\\
2 &\text{if } n> 3\cdot 2^{\lfloor \log_2(n-1) \rfloor - 1}.
\end{cases}
\]
\end{lemma}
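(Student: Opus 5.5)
The plan is to transfer de Luca and Varricchio's count of right special factors of ${\bf t}$ to left special factors using reversal, and then to check that their formula matches the piecewise expression in the statement. The first step is the bijection. The set of factors of ${\bf t}$ is closed under reversal, as recalled just before the statement. If $w$ is right special, with $w0$ and $w1$ both factors, then $(w0)^R=0w^R$ and $(w1)^R=1w^R$ are factors, so $w^R$ is left special. The same argument applied to $w^R$ gives the converse. Reversal is an involution and preserves length, so $w\mapsto w^R$ is a bijection between right special and left special factors of length $n$. The two counts therefore agree for every $n$.

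The second step is to import the right special count. The alphabet is binary and every factor of ${\bf t}$ extends to the right. Hence each factor of length $n$ has one or two right extensions, and the number of right special factors of length $n$ equals $\rho_{\bf t}(n+1)-\rho_{\bf t}(n)$. De Luca and Varricchio \cite[Lemma~4.3]{deLucaVarricchio1989} (equivalently, Brlek's formula for the first differences of the Thue--Morse complexity) give this count as follows. Write $n=2^r+q+1$ with $r\ge 0$ and $0\le q<2^r$. Then there are $4$ right special factors of length $n$ when $q<2^{r-1}$, and $2$ when $q\ge 2^{r-1}$, for $n\ge 3$.

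The third step, which I expect to be the only real obstacle, is reconciling this with the normalization in the statement. Since $n-1=2^r+q$ and $0\le q<2^r$, we have $r=\lfloor\log_2(n-1)\rfloor$. The condition $q<2^{r-1}$ is equivalent to $n\le 2^r+2^{r-1}=3\cdot 2^{r-1}$, which is exactly the threshold $3\cdot 2^{\lfloor\log_2(n-1)\rfloor-1}$ in the statement. The cited lemma may state its ranges with different endpoint conventions or a shift of index, so I would compare it against the first few values $\rho_{\bf t}(n)=1,2,4,6,10,12,16,20,22,\dots$.

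The edge case $n=2$ (and $n=3$, if the cited lemma starts later) I would check by hand. For $n=2$ we have $r=0$ and threshold $3/2<2$, so the statement predicts $2$. Directly, $01$ and $10$ are left special, while $00$ and $11$ are not, because $000$ and $111$ do not occur in ${\bf t}$. This also agrees with $\rho_{\bf t}(3)-\rho_{\bf t}(2)=6-4=2$.
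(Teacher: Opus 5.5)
Your proposal is correct and follows the paper's route. Closure of the factor set of ${\bf t}$ under reversal gives a length-preserving bijection between right and left special factors, and the count is then imported from de Luca and Varricchio's Lemma~4.3. Your extra checks fill in bookkeeping the paper leaves implicit, and they agree with the first differences of $\rho_{\bf t}$: the equivalence of $q<2^{r-1}$ (where $n-1=2^r+q$, $0\le q<2^r$) with $n\le 3\cdot 2^{\lfloor\log_2(n-1)\rfloor-1}$ for $n\ge 3$, and the direct verification at $n=2$.
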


Later, we will see how this information allows us to easily compute $\rho_{\bf t}^{\textsf{t}}(n)$.

\section{Paperfolding and stack sorting}\label{secpaperstack}
 For a positive integer $n$, write $n = n'2^{k}$ for an odd number $n'$, and define 
 \[ f_{n} = \begin{cases} 
 0 & \text{if $n' \, \equiv \, 1(\operatorname{mod} \, 4)$,} \\
 1 & \text{if $n' \, \equiv \, 3(\operatorname{mod} \, 4)$}. 
 \end{cases}
\]
 Being consistent with the work of Madill and Rampersad \cite{MadillRampersad2013}, 
 we then write $\text{{\bf f}} = (f_{n})_{n \geq 1}$ as an infinite word, yielding
\begin{equation}\label{numericalf}
 \text{{\bf f}} = 001001100011011000100111001101100010011000110111001001\ldots
\end{equation}
 with the   consecutive  terms of the word displayed in \eqref{numericalf}
 being indexed in the On-line Encyclopedia of Integer Sequences as {\tt A014707}
 and forming the (regular) \emph{paperfolding sequence}. 

 The subword complexity of $\text{{\bf f}}$ is such that 
\begin{equation}\label{numericalpf}
 \left( \rho_{\text{{\bf f}}}(n) : n \in \mathbb{N} \right) 
 = \big( 2, 4, 8, 12, 18, 23, 28, 32, 36, 40, 44, 48, 52, 56, \ldots \big), 
\end{equation}
 with the integer sequence in \eqref{numericalpf} being indexed in the OEIS as 
 {\tt A337120}. It was first proved by Allouche \cite{Allouche1992} in 1992 
 that $ \rho_{\text{{\bf f}}}$ satisfies 
\begin{equation}\label{displayAllouche}
 \rho_{\text{{\bf f}}}(n) = 4n \ \ \ \text{for all $n \geq 7$}.
\end{equation}
 In contrast to the sequence of values in \eqref{numericalpf}, 
 we have that 
\begin{equation}\label{numericalpf_ab}
 \left( \rho_{\text{{\bf f}}}^{\operatorname{ab}}(n) : n \in \mathbb{N} \right) 
 = \big( 2, 3, 4, 3, 4, 5, 4, 3, 4, 5, 6, 5, 4, 5, 4, 3, 4, 5, 6, 5, \ldots \big),  
\end{equation}
 with the integer sequence in \eqref{numericalpf_ab} being indexed in the OEIS as {\tt A214613} 
 and having been shown to be $2$-regular by Madill and Rampersad \cite{MadillRampersad2013}. 
 Informally, the contrast between the respective terms of 
 \eqref{numericalpf} and \eqref{numericalpf_ab} 
 suggests a range of possibilities ``between'' 
 the subwords counted by 
 $ \rho_{\text{{\bf f}}}(n)$ and $ \rho_{\text{{\bf f}}}^{\operatorname{ab}}(n)$ respectively, for a 
 positive integer $n$, and this raises questions as to how
 Definition \ref{definektort} can be used to formalize this idea. 
 In this direction, it can be shown that 
\begin{equation}\label{numericaltort}
 \left( \rho^{\textsf{t}}_{\text{{\bf f}}}(n) : n \in \mathbb{N} \right) 
 = \big( 2, 3, 5, 7, 12, 18, 26, 32, 36, 40, 44, 48, 52, 56, 60, \ldots \big), 
\end{equation}
 noting the contrast to both \eqref{numericalpf} and \eqref{numericalpf_ab}. 
 The integer sequence in \eqref{numericaltort} is not currently indexed in the OEIS, 
 suggesting that Definition \ref{definektort} is new. 
 To illustrate the contrast between 
 $\rho_{\text{{\bf f}}}$ and $\rho_{\text{{\bf f}}}^{\textsf{t}}$, 
 consider, for example, the valuation 
 $\rho_{\text{{\bf f}}}(2) = 4$, 
 which gives that all four length-2 binary subwords appear as subwords of $\text{{\bf f}}$. 
 In contrast, we see that 
$\rho^{\textsf{t}}_{\text{{\bf f}}}(2) = 3$, since $ 10 \sim_{\textsf{t}} 01$.

 In addition to how the notion of $k$-tortoise complexity introduced above provides a natural extension of abelian complexity functions, 
 Definition \ref{definektort} may also be seen in relation to the notion of \emph{cyclic complexity} for infinite sequences introduced in 
 2017 by Cassaigne et al.\ \cite{CassaigneFiciSciortinoZamboni2017}, 
 whereby two finite words are cyclically equivalent if one is a cyclic permutation of the other. 
 If a binary word begins with a run consisting of $k$   $1$-characters, 
 then up to $k$ applications of the $\textsf{tortoise}$ operation
 will yield cyclic shifts. 

 Experimentally, we have discovered that 
\begin{equation}\label{purported}
 \rho_{\text{{\bf f}}}^{\textsf{t}}(n) = \rho_{\text{{\bf f}}}(n) \ \ \ \text{for $n \geq 8$}. 
\end{equation}
 The purported relation in \eqref{purported} would provide a natural companion to Allouche's evaluation for $\rho_{\text{{\bf f}}}$ in 
 \eqref{displayAllouche}, and suggests that the way the $\textsf{tortoise}$ operator
 acts on binary words
 together with the definition of tortoise complexity
 help to give light to structural aspects about paperfolding subwords. 
 For example, one might consider the desired formula in \eqref{purported}
 in relation to the work of Allouche and Bousquet-M\'elou \cite{AlloucheBousquetMelou1994}, 
 which concerned relations as in 
 $ u_{n} = u_{n + r 2^{i+1}} $ for $n = 2^{j}(2k+1)$ and $i > j$ and for $r \geq 0$
 and for generalized paperfolding sequences $u$. 

 The application of $\textsf{tortoise}$ to a given binary word may be seen as natural in the context of combinatorics on words and related 
 areas in computer science, since, as above, this has the effect of repositioning an initial $1$-character (assuming there is such a character) 
 at the end, providing an extension of the usual lexicographic sorting operation
 in the manner indicated in \eqref{tortsort}. 
 The purported relation in \eqref{purported}
 is helpful in terms of developing a 
 deeper understanding of how the factor and abelian complexity functions
 for $\text{{\bf f}}$ relate to one another, since, as suggested previously, 
 the complexity function $\rho_{\text{{\bf f}}}^{\textsf{t}}$ may be seen as ``between''
 the extremes given by $\rho_{\text{{\bf f}}}$ and $\rho_{\text{{\bf f}}}^{\operatorname{ab}}$. 
 This together with Allouche's evaluation of 
 $\rho_{\text{{\bf f}}}$ \cite{Allouche1992} 
 and the Madill--Rampersad recursion for $\rho_{\text{{\bf f}}}^{\operatorname{ab}}$ 
 \cite{MadillRampersad2013} 
 motivate our below proof of \eqref{purported}
 using the {\tt Walnut} software, with reference to Shallit's text on this 
 software \cite{Shallit2023logical}. 
 
\begin{theorem}\label{keyWalnut}
 For subwords $w$ and $v$ of $\text{{\bf f}}$ such that $\ell(w) = \ell(v) \geq 8$ (noting that each of these words necessarily contains a 
 0-character and a 1-character), 
 if the word obtained from $w$ by repositioning the first $1$ in $w$ at the end
 equals the word obtained from $v$ by repositioning the first $1$ in $v$ at the end, 
 then $v = w$. 
\end{theorem}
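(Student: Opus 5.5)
The plan is to recast the statement as a first-order property of the paperfolding sequence, which \texttt{Walnut} can decide because $\text{{\bf f}}$ is $2$-automatic. Concretely, for subwords $\text{{\bf f}}[i..i+n-1]$ and $\text{{\bf f}}[j..j+n-1]$, let $p$ and $q$ denote the positions of the first $1$ in each. The condition ``$p$ is the first $1$ at or after $i$'' is first-order:
\[
\text{{\bf f}}[p]=1 \wedge \forall t\,(i\le t<p \Rightarrow \text{{\bf f}}[t]=0).
\]
The existence of $p<i+n$ for $n\ge 8$ is itself a \texttt{Walnut}-checkable fact, and it will be verified first. It justifies the parenthetical remark in the statement, since no factor of length $8$ of $\text{{\bf f}}$ is constant.

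Next, write $w=0^{a}1y$ with $a=p-i$. The image of $w$ is $0^{a}y1$, so its letter at index $r$ (for $0\le r<n$) is:
\begin{itemize}
\item $\text{{\bf f}}[i+r]$ if $r<a$;
\item $\text{{\bf f}}[i+r+1]$ if $a\le r<n-1$;
\item $1$ if $r=n-1$.
\end{itemize}
Each case is an arithmetic condition on $i,p,n,r$, so equality of the two images becomes
\[
\forall r<n\ \big(\mathrm{img}_{i,p}(r)=\mathrm{img}_{j,q}(r)\big),
\]
written as a disjunction over the six combinations of the cases $r<a$, $r<b$, and so on, where $b=q-j$. The conclusion $v=w$ is the usual factor-equality predicate $\forall r<n\ \text{{\bf f}}[i+r]=\text{{\bf f}}[j+r]$.

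The theorem then becomes the single sentence
\[
\forall n,i,j,p,q\ \big(n\ge 8 \wedge \mathrm{first1}(i,p) \wedge \mathrm{first1}(j,q) \wedge \mathrm{sameimg}(i,j,p,q,n)\Rightarrow \mathrm{facteq}(i,j,n)\big).
\]
This sentence is submitted to \texttt{Walnut} using the built-in paperfolding automaton \texttt{PF} (or one built from the definition of $f_n$, taking care with the $1$-indexing used here). Its truth value is expected to be \texttt{TRUE}.

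As a sanity check, the same query with the bound $n\ge 7$ should return \texttt{FALSE}, matching the discrepancy $26\neq 28$ at $n=7$ in \eqref{numericaltort} and \eqref{numericalpf}.

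A human-readable cross-check is also available from Lemma~\ref{lem_left_sp}. Any counterexample would force $zv$ to be left special, with $z$ preceded by different letters in two positions. Since the image equality forces $w$ and $v$ to differ only in a prefix where the first $1$ sits, this amounts to a $0^{a}1$ versus $0^{b}1$ shift phenomenon. This explains why only short lengths can fail, but it is not needed for the proof.

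The main obstacle is computational rather than conceptual. The formula quantifies over five variables with several nested universal quantifiers over $r$, and the intermediate automata for $\mathrm{sameimg}$ may blow up. To control this, I would first define $\mathrm{sameimg}$ as a separate \texttt{Walnut} \texttt{def}. I would also eliminate $p$ and $q$ early by replacing them with their unique witnesses, via a definition such as $\mathrm{first1}$ computed once. If the query is still too costly, I would split on whether $a=b$. When $a=b$, equal images immediately give $w=v$ by direct argument, because the map removing the letter at a known index and appending $1$ is invertible. This leaves only the case $a\ne b$ for \texttt{Walnut}, where the statement reduces to showing that no factor of length $\ge 8$ admits such a shift.
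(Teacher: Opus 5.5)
Your proposal is correct and is essentially the paper's proof: both turn the claim into a single first-order sentence about the $2$-automatic word $\text{{\bf f}}$ and let \texttt{Walnut} decide it. The only difference is in how $\sim_{\textsf{t}}$ is encoded. The paper uses the observation that two binary words with at least two $1$'s are tortoise-equivalent exactly when their second $1$ sits at the same offset $r$ and they agree from offset $r$ onward, which replaces your letter-by-letter comparison of images and its case split with one suffix-equality predicate.

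One small fix: your $\mathrm{first1}(i,p)$ needs the conjunct $i\le p$. As written it is also satisfied by every $1$ before position $i$. If, for instance, $r<a$ is rendered as $i+r<p$, these spurious witnesses yield pairs $0y$, $1y$ with equal computed images and make \texttt{Walnut} return \texttt{FALSE}.
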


\begin{proof}
 In the {\tt Walnut} system, we input the following commands. 
\begin{verbatim}
def pffactoreq "At t<n => P[i+t]=P[j+t]":
def pfsecond1 "Es s<r & P[i+s]=@1 & P[i+r]=@1 & 
(At (t<r & t!=s)=>P[i+t]=@0)":
def pftortoise "n>=8 & Er $pfsecond1(i,r) & $pfsecond1(j,r) &
$pffactoreq(i+r,j+r,n-r)":
def pflemma "Ai,j,n $pftortoise(i,j,n) => $pffactoreq(i,j,n)":
\end{verbatim}
 We then obtain that 
\begin{verbatim}
$pffactoreq(i,j,n): TRUE
\end{verbatim}
 when 
\begin{verbatim}
f[i...i+n-1] == f[j..j+n-1]
\end{verbatim}
 and 
\begin{verbatim}
$pfsecond1(i,r): TRUE 
\end{verbatim}
 when 
 $i+r$ is the position of the second $1$
starting from position $i$ in $\text{{\bf f}}$ and 
\begin{verbatim}
$pftortoise(i,j,n): TRUE
\end{verbatim}
 when $n \geq 8$ and {\tt f[i...i+n-1]} and {\tt f[j..j+n-1]}
are tortoise-equivalent (where we can check that both words have the second $1$
at the same position and are identical after that)
and 
\begin{verbatim}
$pflemma(i,j,n):
\end{verbatim}
 checks that {\tt f[i...i+n-1]} and {\tt f[j..j+n-1]} are tortoise-equivalent only when they are equal, with this returning {\tt TRUE}. 
\end{proof}

\begin{example}
 The $ \rho_{\text{{\bf f}}}(8) = 32$ length-8 subwords of $\text{{\bf f}}$
 are as below. 

\begin{center}
 00100110 \ \ \ \ \ \ \ 01001100 \ \ \ \ \ \ \ 10011000 \ \ \ \ \ \ \ 00110001 \ \ \ \ \ \ \ 01100011 

11000110 \ \ \ \ \ \ \ 10001101 \ \ \ \ \ \ \ 00011011 \ \ \ \ \ \ \ 00110110 \ \ \ \ \ \ \ 01101100 

 11011000 \ \ \ \ \ \ \ 10110001 \ \ \ \ \ \ \ 01100010 \ \ \ \ \ \ \ 11000100 \ \ \ \ \ \ \ 10001001 
 
 00010011 \ \ \ \ \ \ \ 00100111 \ \ \ \ \ \ \ 01001110 \ \ \ \ \ \ \ 10011100 \ \ \ \ \ \ \ 00111001 

 01110011 \ \ \ \ \ \ \ 11100110 \ \ \ \ \ \ \ 11001101 \ \ \ \ \ \ \ 10011011 \ \ \ \ \ \ \ 00110111 

 01101110 \ \ \ \ \ \ \ 11011100 \ \ \ \ \ \ \ 10111001 \ \ \ \ \ \ \ 01110010 \ \ \ \ \ \ \ 11100100 

 11001001 \ \ \ \ \ \ \ 10010011 
\end{center}

\noindent In a corresponding way, we have that 
 $\rho^{\textsf{t}}_{\text{{\bf f}}}(8) = 32$. 
 For example, consider the eleventh subword 
 11011000 highlighted above, 
 and evaluate 
\begin{equation}\label{length8tort}
 \textsf{tortoise}(11011000) = 10110001.
\end{equation}
 Now, for each length-8 subword $w$ of $\text{{\bf f}}$ listed above, 
 we compute $\textsf{tortoise}(w)$, as below (and being consistent with the above ordering), 
 and we may check that the right-hand side of \eqref{length8tort}
 appears only in the eleventh position below, i.e., so that 
 the equivalence class with respect to $\sim_{\textsf{t}}$ 
 of 11011000 is a singleton set. 
 
\begin{center}
 00001101 \ \ \ \ \ \ \ 00011001 \ \ \ \ \ \ \ 00110001 \ \ \ \ \ \ \ 00100011 \ \ \ \ \ \ \ 01000111 

 10001101 \ \ \ \ \ \ \ 00011011 \ \ \ \ \ \ \ 00010111 \ \ \ \ \ \ \ 00101101 \ \ \ \ \ \ \ 01011001 

 10110001 \ \ \ \ \ \ \ 01100011 \ \ \ \ \ \ \ 01000101 \ \ \ \ \ \ \ 10001001 \ \ \ \ \ \ \ 00010011 
 
 00000111 \ \ \ \ \ \ \ 00001111 \ \ \ \ \ \ \ 00011101 \ \ \ \ \ \ \ 00111001 \ \ \ \ \ \ \ 00110011 

 01100111 \ \ \ \ \ \ \ 11001101 \ \ \ \ \ \ \ 10011011 \ \ \ \ \ \ \ 00110111 \ \ \ \ \ \ \ 00101111 

 01011101 \ \ \ \ \ \ \ 10111001 \ \ \ \ \ \ \ 01110011 \ \ \ \ \ \ \ 01100101 \ \ \ \ \ \ \ 11001001 

 10010011 \ \ \ \ \ \ \ 00100111 
\end{center} 
\end{example}

\begin{theorem}\label{evalpapertort}
 The tortoise complexity evaluation $ \rho_{\text{{\bf f}}}^{\textsf{t}}(n) = 4n$ holds for all $n \geq 8$. 
\end{theorem}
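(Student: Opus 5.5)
The plan is to combine Theorem \ref{keyWalnut} with Allouche's evaluation \eqref{displayAllouche}, showing that for $n \geq 8$ every $\sim_{\textsf{t}}$-equivalence class of length-$n$ subwords of $\text{{\bf f}}$ is a singleton. Then $\rho_{\text{{\bf f}}}^{\textsf{t}}(n) = \rho_{\text{{\bf f}}}(n) = 4n$. The upper bound $\rho_{\text{{\bf f}}}^{\textsf{t}}(n) \leq \rho_{\text{{\bf f}}}(n)$ is immediate, since $\sim_{\textsf{t}}$ only merges subwords. So the content lies in the reverse inequality, i.e., in the injectivity of $\textsf{tortoise}$ on length-$n$ subwords of $\text{{\bf f}}$.

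First I will record what $\textsf{tortoise}$ does to a binary word containing at least one $1$. Take $n=1$ as the maximal character and write $w = A_1 1 A_2 1 \cdots 1 A_{k+1}$ with each $A_i$ a power of $0$. Since $\textsf{tortoise}$ fixes $0$-powers, the defining recursion gives
\[
\textsf{tortoise}(w) = A_1 A_2 1 A_3 1 \cdots 1 A_{k+1} 1 .
\]
This is exactly the word obtained from $w$ by deleting its first $1$ and appending a $1$ at the end, as already remarked after \eqref{tortoisek}.

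Next I will note that every length-$n$ subword of $\text{{\bf f}}$ with $n \geq 8$ contains both a $0$ and a $1$. This can be read off from the list of length-$8$ subwords in the example, or argued directly: $\text{{\bf f}}$ has no run of length $3$ since $f_{4m+1}=0$ and $f_{4m+3}=1$. Hence the description above applies to every such subword. Now suppose $w \sim_{\textsf{t}} v$ for subwords $w$, $v$ of $\text{{\bf f}}$ of length $n \geq 8$. Then the word obtained from $w$ by moving its first $1$ to the end equals the word obtained from $v$ in the same way. Theorem \ref{keyWalnut} then gives $w=v$, so each equivalence class is a singleton and $\rho_{\text{{\bf f}}}^{\textsf{t}}(n) = \rho_{\text{{\bf f}}}(n)$. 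Applying \eqref{displayAllouche}, valid for $n \geq 7$, yields $4n$.

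The real obstacle is Theorem \ref{keyWalnut} itself, which is delegated to \texttt{Walnut}. Given that result, the only point needing care is the correct translation of $\textsf{tortoise}$ on binary words into the operation of moving the first $1$ to the end. This is handled by the displayed formula above, and it matches the \texttt{Walnut} encoding: two words agree after this operation exactly when their second $1$ sits at the same offset and they coincide from that point on.
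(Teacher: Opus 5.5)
Your proposal is correct and is essentially the paper's proof. Theorem~\ref{keyWalnut} makes every $\sim_{\textsf{t}}$-class of length-$n$ subwords of the paperfolding word a singleton for $n \geq 8$, and Allouche's evaluation \eqref{displayAllouche} then gives $4n$; your explicit formula for $\textsf{tortoise}$ on binary words only makes the translation into the hypothesis of Theorem~\ref{keyWalnut} visible.

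One small slip: the paperfolding word does contain runs of length $3$, for example $f_8f_9f_{10}=000$ and $f_{22}f_{23}f_{24}=111$. What $f_{4m+1}=0$ and $f_{4m+3}=1$ actually rule out is a run of length $4$, and that is all you need.
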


\begin{proof}
 By Theorem \ref{keyWalnut}, two subwords $v$ and $w$ of $\text{{\bf f}}$ such that $\ell(v) = \ell(w) \geq 8$ are tortoise-equivalent if 
 and only if $v = w$, so that \eqref{purported} holds and so that the desired evaluation 
 follows from Allouche's evaluation in \eqref{displayAllouche}. 
\end{proof} 

 A similar approach gives us that 
$$ \rho_{\text{{\bf f}}}^{\textsf{t}^{(2)}}(n) = 4n \ \ \ \text{for all $n \geq 13$}, $$
 and, more generally, that 
\begin{align*}
 \rho_{\text{{\bf f}}}^{\textsf{t}^{(3)}}(n) & = 4n \ \ \ \text{for all $n \geq 14$}, \\ 
 \rho_{\text{{\bf f}}}^{\textsf{t}^{(4)}}(n) & = 4 n \ \ \ \text{for all $n \geq 25$}, \\ 
 \rho_{\text{{\bf f}}}^{\textsf{t}^{(5)}}(n) & = 4 n \ \ \ \text{for all $n \geq 26$}, \\ 
 & \cdots 
\end{align*}
 For a positive integer $k$, suppose that 
 $ \rho_{\text{{\bf f}}}^{\textsf{t}^{(k)}}(n) = 4 n$ for all $n \geq s(k)$
 for a statistic $s(k)$ depending on $k$. 
 For the initial values of $s(k)$ suggested above, 
 the associated integer sequence $(8, 13, 14, 25, 26, \ldots)$ 
 is not currently in the OEIS. The problem of evaluating $s(k)$
 is equivalent to the problem of evaluating $\operatorname{abel}_{\text{{\bf f}}}(k)$, 
 and we further consider this problem in the Conclusion. 

 \section{Stack sorting and the TM sequence}\label{secTMmain}
 The Thue--Morse sequence is, as above, a prototypical automatic sequence and one of the most fundamental automatic sequences. This 
 may be illustrated with Allouche and Shallit's exposition on this sequence \cite{AlloucheShallit1999}. For a nonnegative integer $n$, 
 by setting $\text{{\bf t}}_{n}$ as the number of $1$-digits, modulo 2, in the binary expansion of $n$, 
 the Thue--Morse sequence may be defined as the sequence $\left( \text{{\bf t}}_{n} : n \in \mathbb{N}_{0} \right)$, writing
\begin{equation}\label{numericalTM}
 \text{{\bf t}} = 
 011010011001011010010110011010011001011001101001011010\ldots
\end{equation}
 with \eqref{numericalTM} being indexed in the OEIS as {\tt A010060}. 
 As it turns out, the tortoise complexity for $\text{{\bf t}}$ 
 seems to reveal deeper properties relative to the paperfolding sequence, 
 in view of Theorem \ref{mainTM} below, which that we apply to evaluate $\rho_{\text{{\bf t}}}^{\textsf{t}}$. 

 The subword complexity of $\text{{\bf t}}$ is such that 
\begin{equation}\label{numericalrhotm}
 \left( \rho_{\text{{\bf t}}}(n) : n \in \mathbb{N} \right) 
 = (2, 4, 6, 10, 12, 16, 20, 22, 24, 28, 32, 36, 40, 42, 44, \ldots), 
\end{equation}
 yielding the sequence {\tt A005942} in the OEIS, 
 and satisfies 
\begin{align*}
 \rho_{\text{{\bf t}}}(2n) & = \rho_{\text{{\bf t}}}(n) + \rho_{\text{{\bf t}}}(n+1), \ \text{and} \\ 
 \rho_{\text{{\bf t}}}(2n+1) & = 2 \rho_{\text{{\bf t}}}(n+1) 
\end{align*}
 for $n$ greater than $1$. The above recurrence relations for $\rho_{\text{{\bf t}}}$ were proved independently by Brlek \cite{Brlek1989}, 
 de Luca and Varricchio \cite{deLucaVarricchio1989}, and Avgustinovich \cite{Avgustinovich1994}. This raises questions as to how the 
 complexity functions $\rho_{\text{{\bf t}}}$ and $\rho_{\text{{\bf t}}}^{\textsf{t}}$ relate to one another, 
 and this provides the main topic of the current section. 
 In contrast to \eqref{numericalrhotm}, we have that the integer sequence 
\begin{equation}\label{abTMtort}
 \left( \rho^{\operatorname{ab}}_{\text{{\bf t}}}(n) : n \in \mathbb{N} \right) 
 = (2, 3, 2, 3, 2, 3, 2, 3, 2, 3, 2, 3, 2, 3, 2, 3, 2, 3, 2, 3, \ldots) 
\end{equation}
 is $2$-periodic, whereas 
\begin{equation}\label{rhott}
 \left( \rho^{\textsf{t}}_{\text{{\bf t}}}(n) : n \in \mathbb{N} \right) 
 = (2, 3, 3, 6, 8, 12, 15, 17, 21, 26, 28, 32, 36, 38, 42, \ldots) 
\end{equation}
 is not currently included in the OEIS. 
 As suggested above, the tortoise complexity function
 $\rho_{\text{{\bf t}}}^{\textsf{t}}$ may be seen as being ``between''
 the two extremes given by the complexity functions in 
 \eqref{numericalrhotm} and \eqref{abTMtort}, 
 and Theorems~\ref{TM_eq_cl} and~\ref{mainTM} below 
 determines the equivalence classes with respect to $\sim_{\textsf{t}}$ underlying this phenomenon. 

\begin{theorem}\label{TM_eq_cl}
 Let $n$ be an integer such that $n \geq 9$. If there are length-$n$ subwords $v \neq w$
 of $\text{{\bf t}}$ such that $v \sim_{\textsf{t}} w$, then
 $\{v,w\} = \{01y, 10y\}$, where $y$ is left special.
\end{theorem}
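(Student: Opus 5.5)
Write $v=0^a1x$ and $w=0^b1x'$ for some words $x$ and $x'$. This works because every length-$n$ subword of $\mathbf{t}$ with $n\ge 9$ contains a $1$, since $\mathbf{t}$ has no $000$. For a binary word, $\textsf{tortoise}$ deletes the first $1$ and appends a $1$ at the end. So $v\sim_{\textsf{t}} w$ means $0^a x = 0^b x'$, and $a\neq b$ because $v\neq w$. Without loss of generality $a<b$, so $x = 0^{b-a}x'$. Because $\mathbf{t}$ is cube-free, $a,b\in\{0,1,2\}$.

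The plan is to show $\{a,b\}=\{0,1\}$, which gives $v=1\,0\,x'$ and $w=0\,1\,x'$. Set $y=x'$. By Lemma~\ref{lem_left_sp}, taking the prefixes $u=10$, $u'=01$ and common suffix $y$, the word $y$ is left special, since it is preceded by $0$ in $v$ and by $1$ in $w$. (Here the longest common suffix $z$ of $u$ and $u'$ is empty.)

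It remains to rule out the other two cases $(a,b)$.
\begin{itemize}
\item If $(a,b)=(0,2)$, then $v=1\,00\,x'$ and $w=001\,x'$. In particular $100x'$ and $001x'$ are both factors of $\mathbf{t}$ of length at least $9$, so $|x'|\ge 6$.
\item If $(a,b)=(1,2)$, then $v=01\,0x'$ and $w=001x'$, so $010x'$ and $001x'$ are both factors.
\end{itemize}
In each case the words $00x'$ and $01x'$, respectively $10x'$ and $01x'$, share the long suffix $x'$ and differ in two adjacent positions. I would kill these using the structure of $\mathbf{t}$. A factor of length at least $7$ has a unique decomposition into blocks $01/10$, that is, a unique parity of cutting points ("synchronization"). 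For $100x'$ versus $001x'$: the block $00$ must straddle a cut, so in $100x'$ the cut is between the two $0$'s. In $001x'$ the cut is also between the two $0$'s, which puts the first $0$ of $001x'$ at a block boundary shifted by one relative to the first. Since $x'$ is a common long suffix with a forced cut parity, comparing the two cut parities on $x'$ gives a contradiction. Case $(1,2)$ is handled the same way. If this synchronization argument is messy, the fallback is a \texttt{Walnut} check as in Theorem~\ref{keyWalnut}: assert that for all $i,j$ and $n\ge 9$, if the factors at $i$ and $j$ are tortoise-equivalent and distinct, then their length-$2$ prefixes are $\{01,10\}$ and they agree from position $2$ onward.

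The main obstacle is this parity/synchronization bookkeeping in the excluded cases, specifically pinning down that $|x'|$ is large enough to force the cut. That requirement is where the hypothesis $n\ge 9$ enters, and I expect small counterexamples to exist for $n\le 8$. Once the excluded cases are eliminated, the conclusion $\{v,w\}=\{01y,10y\}$ with $y$ left special follows immediately from the computation above and Lemma~\ref{lem_left_sp}.
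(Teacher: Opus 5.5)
\textbf{There is a genuine gap, and it cannot be closed: the statement is false at $n=9$.}

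Your reduction is correct. For binary words, $\textsf{tortoise}$ deletes the first $1$ and appends a $1$. So a distinct equivalent pair has the form $v=0^a1\,0^{b-a}x'$ and $w=0^b1\,x'$ with $0\le a<b\le 2$, and the case $\{a,b\}=\{0,1\}$ gives the conclusion. The case $(a,b)=(0,2)$ is correctly killed by synchronization: $x'$ starts at an odd index of $\mathbf{t}$ in $100x'$ and at an even index in $001x'$. For $n=9$ you need the sharp threshold that every factor of length at least $5$ has a unique parity, because $|x'|=6$.

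The failing step is the sentence ``Case $(1,2)$ is handled the same way.'' It is not. The $00$ in $001x'$ forces $x'$ to start at an even index, but the prefix $010$ of $010x'$ forces nothing, so no parity conflict arises. In fact this case occurs at $n=9$: $v=\mathbf{t}_3\cdots\mathbf{t}_{11}=010011001$ and $w=\mathbf{t}_{17}\cdots\mathbf{t}_{25}=001011001$ are distinct factors with $\textsf{tortoise}(v)=\textsf{tortoise}(w)=000110011$. The pair $\{v,w\}$ is not of the form $\{01y,10y\}$, and your \texttt{Walnut} fallback would return FALSE. This matches the paper's own data. We have $\rho_{\mathbf t}(9)-\rho^{\textsf{t}}_{\mathbf t}(9)=24-21=3$, whereas Lemma~\ref{lem_dLV} gives only two left special factors of length $7$. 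Theorem~\ref{evalTMtort} is also claimed only for $n\ge 10$.

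The paper's proof has the same hole. It allows $|u|\le 4$, but its Case~2 lists only $\{u,u'\}=\{010,100\}$. It omits $\{u,u'\}=\{0100,0010\}$, which is exactly your case $(1,2)$; in the example, $z=11001$.

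The repair needs the hypothesis $n\ge 10$ and more than a parity mismatch:
\begin{itemize}
\item In case $(1,2)$, $x'$ starts at an even index in both occurrences, so $v$ and $w$ are both preceded by $1$.
\item Desubstituting by $0\mapsto 01$, $1\mapsto 10$ shows that $11X$ and $10X$ are both factors of $\mathbf t$, where $X$ is the preimage of $x'$ with its last block completed.
\item The block structure and the absence of $111$ force $X$ to begin with $010$.
\item The next letter must then be $0$ in $11010\cdots$, to avoid $10101$.
\item It must also be $1$ in $10010\cdots$: this word starts at an even index, so its fifth letter opens a block beginning with $0$.
\item This contradiction gives $|X|\le 3$, hence $|x'|\le 6$ and $n\le 9$.
\end{itemize}
With the hypothesis changed to $n\ge 10$, this completes your outline.
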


\begin{proof}
 Since $n\geq 9$, $v$ and $w$ each have at least two $1$'s. Write $v=uz$ and $w=u'z$ where $z$ begins
 with a $1$ and $u$ and $u'$ each contain exactly one $1$. Note that $|u|\leq 4$ and $|z|\geq 5$.
 Since ${\bf t}$ is a concatenation of $01$'s and $10$'s and does not contain $01010$ or $10101$,
 if $z$ is a factor of ${\bf t}$ of length at least $5$, then the positions of every occurrence of
 $z$ in ${\bf t}$ have the same parity. We have two cases:

 {\bf Case 1}: $z$ occurs at a position of even parity. Then either $u=01$ and $u'=10$
 or vice-versa and $y=z$ is left special.

{\bf Case 2}: $z$ occurs at a position of odd parity. Since $z$ begins with a $1$,
either $u=010$ and $u'=100$ or vice-versa and $y=0z$ is left special.
\end{proof}

The next result gives a somewhat more explicit version of the previous
theorem, in the sense that whenever we have a $\sim_{\textsf{t}}$ equivalence class
of size $2$, i.e., $\{v,w\} = \{01y, 10y\}$, the theorem specifies where precisely
we can find an occurrence of either $v$ or $w$ in ${\bf t}$.
Note the two cases in the statement of the theorem and compare with Lemma~\ref{lem_dLV}.

\begin{theorem}\label{mainTM}
 Let $n$ be an integer such that $n \geq 11$. Let the binary expansion of $n-3$ be written as
\begin{equation}\label{nminus3}
 n - 3 = [b_1 b_2 \cdots b_{m} ]_{2}. 
\end{equation}
 Define 
\begin{align}
 \operatorname{counter}_1(n) & = \text{{\bf t}}_{2^{m+1}-2} \text{{\bf t}}_{2^{m+1}-1} 
 \cdots \text{{\bf t}}_{2^{m+1} + n -3}, \label{counter1} \\ 
 \operatorname{counter}_2(n) & = \text{{\bf t}}_{2^{m+1} +2^{m-1} - 2 } \text{{\bf t}}_{2^{m+1} +2^{m-1} - 1 } 
 \cdots \text{{\bf t}}_{2^{m+1} +2^{m-1} +n - 3 }, \label{counter2} \\ 
 \operatorname{counter}_3(n) & = \text{{\bf t}}_{2^{m+1} + 2^{m-1} + 2^{m-2} - 2 } 
 \cdots \text{{\bf t}}_{2^{m+1} + 2^{m-1} + 2^{m-2} + n - 3}, \ \text{and} \label{counter3} \\ 
 \operatorname{counter}_4(n) & = \text{{\bf t}}_{2^{m+1} + 2^{m } - 2 } 
 \text{{\bf t}}_{2^{m+1} + 2^{m } - 1 } 
 \cdots \text{{\bf t}}_{2^{m+1} + 2^{m } +n-3}. \label{counter4}
\end{align}
 Let $w$ be a length-$n$ subword of $\text{{\bf t}}$. 
 If $b_1 b_2 = 10$, and 
 if there is a subword $v \neq w$
 of $\text{{\bf t}}$ such that $v \sim_{\emph{\textsf{t}}} w$, 
 then $w$ or $v$ is equal to one of the counterexamples among \eqref{counter1}--\eqref{counter4}. If $b_1 b_2 = 11$, and if there is a 
 subword $v \neq w$ of $\text{{\bf t}}$ such that $v \sim_{\emph{\textsf{t}}} w$, 
 then $w$ or $v$ is equal to one of the counterexamples among 
 \eqref{counter1} and \eqref{counter4}. 
\end{theorem}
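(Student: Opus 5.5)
By Theorem~\ref{TM_eq_cl}, for $n \geq 11$ every nontrivial $\sim_{\textsf{t}}$-class has the form $\{01y, 10y\}$ with $y$ a left special factor of $\mathbf{t}$ of length $n-2$. Conversely, if $y$ is left special, then both $0y$ and $1y$ occur. Because $\mathbf{t}$ is a concatenation of $01$ and $10$ blocks and $|y|\ge 9$ forces a fixed parity of occurrence, $y$ then extends to both $01y$ and $10y$. So the theorem reduces to one claim: for each left special factor $y$ of length $n-2$, one of $01y, 10y$ occurs at the starting index $2^{m+1}+c-2$. Here $c$ ranges over $\{0,2^{m-1},2^{m-1}+2^{m-2},2^m\}$ when $b_1b_2=10$, and over $\{0,2^m\}$ when $b_1b_2=11$. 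The counter words are exactly the factors $\mathbf{t}_{p-2}\cdots\mathbf{t}_{p+n-3}$ with $p=2^{m+1}+c$, so their last $n-2$ letters are the factor of length $n-2$ starting at $p$.

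\textbf{Step 1: counting.} Since $n-3\in[2^{m-1},2^m)$, we have $\lfloor\log_2(n-3)\rfloor=m-1$. Lemma~\ref{lem_dLV} is stated with $n$; applied at length $n-2$ it uses $\lfloor \log_2(n-3)\rfloor -1 = m-2$. So there are $4$ left special factors of length $n-2$ if $n-2\le 3\cdot 2^{m-2}$, and $2$ otherwise. For $b_1b_2=10$ we have $n-3<2^{m-1}+2^{m-2}$, so $n-2\le 3\cdot 2^{m-2}$ and there are four. For $b_1b_2=11$ we have $n-2>3\cdot2^{m-2}$ and there are two. The counts match the number of listed counterexamples, so it suffices to show that the factors of length $n-2$ starting at the listed positions $p$ are left special and pairwise distinct.

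\textbf{Step 2: explicit left special factors.} I will use the block structure of $\mathbf{t}$. Let $\mu$ be the Thue--Morse morphism, so $\mathbf{t}=\mu^{k}(\mathbf{t})$, and write $T_k=\mu^k(0)$ and $\overline{T_k}$ for its complement.

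For $c=0$: position $2^{m+1}$ is the start of $\overline{T_{m+1}}$. The factor starting there is preceded by $\mathbf{t}_{2^{m+1}-1}=1$. The same prefix occurs elsewhere preceded by $0$, namely at an occurrence of $\overline{T_{m+1}}$ (or of the suitable prefix $\overline{T_m}T_m\cdots$) preceded by $T_{m+1}$, which ends in $0$ when $m+1$ is even and in $1$ otherwise. Both cases are possible because $\mathbf{t}$ contains all four of $T_{k}T_k$, $T_k\overline{T_k}$, $\overline{T_k}T_k$, $\overline{T_k}\,\overline{T_k}$.

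The standard description (de Luca--Varricchio, or via reversal of the right special factors) says the left special factors of $\mathbf{t}$ of length $\ell$ are prefixes of length $\ell$ of $T_k\overline{T_k}\ldots$, $\overline{T_k}T_k\ldots$, and, in the range where there are four, also of words of the form $T_{k}\,\overline{T_{k}}\,\overline{T_k}T_k$ shifted by $2^{k-1}$. I will match these against the factors starting at $2^{m+1}$, $2^{m+1}+2^m$ (two complementary ones), $2^{m+1}+2^{m-1}$ and $2^{m+1}+2^{m-1}+2^{m-2}$. The positions are read off from the binary expansion, since $\mathbf{t}_{2^{m+1}+j}=1-\mathbf{t}_j$ for $j<2^{m+1}$.

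\textbf{Step 3: verification, and the main obstacle.} The hard step is checking rigorously that the four (resp.\ two) factors at the listed positions are left special and distinct for every $n$ in the dyadic range, and not just for small cases. I would first try to do this uniformly in {\tt Walnut}, as in Theorem~\ref{keyWalnut}. Define a predicate stating that $\mathbf{t}[p..p+n-3]$ is left special, meaning there exist $i,j$ with $\mathbf{t}[i+1..]=\mathbf{t}[j+1..]=\mathbf{t}[p..]$ on length $n-2$ and $\mathbf{t}_i\ne\mathbf{t}_j$. Then assert, for all $n\ge 11$ with $2^{m-1}\le n-3<2^m$ (expressing $2^m$ via a power-of-two predicate), that every $v\ne w$ with $v\sim_{\textsf{t}} w$ has $v$ or $w$ equal to a factor at one of the listed positions. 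This can be encoded directly, as in \texttt{pftortoise}, by requiring equal suffixes after the second $1$. If a hand proof is preferred, the obstacle becomes a careful bookkeeping of how a window of length $n-2$ starting at $2^{m+1}+c$ sits inside $\mu^{m-1}$-blocks, using the forbidden patterns $01010$, $10101$ and the parity argument from Theorem~\ref{TM_eq_cl}.
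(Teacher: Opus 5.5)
Your reduction and the count in Step~1 are correct, and they give a genuinely different route from the paper's. The paper's proof is a single \texttt{Walnut} check of the whole statement (\texttt{case10} and \texttt{case11}). Theorem~\ref{TM_eq_cl} together with Lemma~\ref{lem_dLV} at length $n-2$ does show that it suffices to prove that the length-$(n-2)$ factors starting at the listed positions $p$ are left special and pairwise distinct. Then, since $p$ is even, $\mathbf{t}_{p-2}\mathbf{t}_{p-1}\in\{01,10\}$, so each counter word is $01y$ or $10y$.

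That step is the entire content of the theorem, and your proposal does not prove it. Step~3 defers it either to \texttt{Walnut} or to bookkeeping that is never carried out. The \texttt{Walnut} assertion you describe is literally the paper's \texttt{case10}/\texttt{case11} check, which makes your reduction superfluous. What Step~2 does say is partly wrong:
\begin{itemize}
\item $\mathbf{t}_{2^{m+1}-1}$ is the parity of $m+1$, so it equals $0$ for odd $m$.
\item An occurrence of $\overline{T_{m+1}}$ preceded by $T_{m+1}$ always has the same preceding letter as the one at $2^{m+1}$, so it cannot witness left-specialness.
\item ``$T_k\overline{T_k}\,\overline{T_k}T_k$ shifted by $2^{k-1}$'' does not describe the factors at $c=2^{m-1}$ and $c=2^{m-1}+2^{m-2}$.
\end{itemize}

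The gap closes quickly, and closing it gives a computer-free proof. Put $i=2^{m-2}$, so $i\ge 4$ and $2i<n-2\le 4i$. The positions are $8i,10i,11i,12i$, the starts of the $\mu^{m-2}$-blocks of index $8,10,11,12$ in $\mathbf{t}=\mu^{m-2}(\mathbf{t})$. Since $\mathbf{t}_8\cdots\mathbf{t}_{15}=10010110$, the four factors are the length-$(n-2)$ prefixes of $\mu^{m-2}(1001)=\overline{T_m}$, $\mu^{m-2}(0101)$, $\mu^{m-2}(1011)$ and $\mu^{m-2}(0110)=T_m$.
\begin{itemize}
\item The words $01001$, $11001$, $00110$, $10110$ are factors of $\mathbf{t}$, and $\mu^{m-2}(0)$, $\mu^{m-2}(1)$ end in different letters. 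So the first and last factors are left special throughout the range.
\item If $b_1b_2=10$ then $n-2\le 3i$, so the middle two factors are prefixes of $\mu^{m-2}(010)$ and $\mu^{m-2}(101)$. They are left special because $0010$, $1010$, $0101$, $1101$ are factors of $\mathbf{t}$.
\item Their first three blocks are the images of $100$, $010$, $101$, $011$, which are pairwise distinct. Each factor contains its first two blocks in full and the first letter of the third, and $\mu^{m-2}(x)$ begins with $x$. Hence the factors are pairwise distinct.
\end{itemize}
So the four factors (for $b_1b_2=10$), respectively the first and last (for $b_1b_2=11$), are distinct left special factors, and by Lemma~\ref{lem_dLV} they are all of them.

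One caution: this route leans on Theorem~\ref{TM_eq_cl}, whose printed proof skips the case $|u|=4$, $\{u,u'\}=\{0010,0100\}$, with $z$ at an odd position. That case really occurs at $n=9$: $\mathbf{t}_3\cdots\mathbf{t}_{11}=010011001$ and $\mathbf{t}_{17}\cdots\mathbf{t}_{25}=001011001$ are tortoise-equivalent. It dies only once $|z|\ge 6$, so your use of the theorem for $n\ge 11$ is sound, but a self-contained proof should say why.
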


\begin{proof}
 Again with the use of the {\tt Walnut} system, we begin by inputting the following. 
\begin{verbatim}
def tmfactoreq "At t<n => T[i+t]=T[j+t]":
def tmsecond1 "Es s<r & T[i+s]=@1 & T[i+r]=@1 & 
(At (t<r & t!=s)=>T[i+t]=@0)":
def tmtortoise "n>=5 & Er $tmsecond1(i,r) & $tmsecond1(j,r) &
$tmfactoreq(i+r,j+r,n-r)":
reg power2 msd_2 "0*10*":
reg starts10 msd_2 "0*10(0|1)*":
reg starts11 msd_2 "0*11(0|1)*":
def pow2forcounter "n>=11 & $power2(i) & 2*i+3<=n & 4*i+3>n":
def counter1 "n>=11 & Ei $pow2forcounter(i,n) & j = 8*i-2":
def counter2 "n>=11 & Ei $pow2forcounter(i,n) & j = 10*i-2":
def counter3 "n>=11 & Ei $pow2forcounter(i,n) & j = 11*i-2":
def counter4 "n>=11 & Ei $pow2forcounter(i,n) & j = 12*i-2":
def counterany "$counter1(j,n)|$counter2(j,n)|$counter3(j,n)|
$counter4(j,n)":
def counter1or4 "$counter1(j,n)|$counter4(j,n)":
def case10 "Ai,j,n (n>=11 & $starts10(n-3) & 
$tmtortoise(i,j,n) &
~$tmfactoreq(i,j,n)) => (Er $counterany(r,n) &
($tmfactoreq(i,r,n)|$tmfactoreq(j,r,n)))":
def case11 "Ai,j,n (n>=11 & $starts11(n-3) & 
$tmtortoise(i,j,n) &
~$tmfactoreq(i,j,n)) => (Er $counter1or4(r,n) &
($tmfactoreq(i,r,n)|$tmfactoreq(j,r,n)))":
\end{verbatim}
 We find that 
\begin{verbatim}
$tmfactoreq(i,j,n): TRUE
\end{verbatim}
 when 
\begin{verbatim}
t[i...i+n-1] == t[j..j+n-1]
\end{verbatim}
 and 
\begin{verbatim}
$tmsecond1(i,r): TRUE
\end{verbatim}
 when $i+r$ is the position of the second $1$
starting from position $i$ in $\text{{\bf t}}$. Similarly, we find that 
\begin{verbatim}
$tmtortoise(i,j,n): TRUE
\end{verbatim}
 when $n \geq 5$ and 
 {\tt t[i...i+n-1]} and {\tt t[j..j+n-1]} 
are tortoise-equivalent, and we can check that both words have the second $1$
at the same position and are identical after that, noting that the condition $n \geq 5$ guarantees
 there are two $1$'s. 
Now, we find that 
\begin{verbatim}
$power2(n): TRUE
\end{verbatim}
 when $n$ is a power of $2$ and 
\begin{verbatim}
$starts10(n): TRUE
\end{verbatim}
 when the base-2 expansion of $n$ starts with 10
 and 
\begin{verbatim}
$starts11(n): TRUE
\end{verbatim}
 when the base-2 expansion of $n$ starts with 11. So, we have that 
\begin{verbatim}
$pow2forcounter(i,n): TRUE
\end{verbatim}
 when $(i,n) = (2^{m-2}, n)$, where $m$ is the
length of the base-2 expansion of $n-3$, 
 recalling the notation in \eqref{nminus3}. 
 From the above definitions, we have that 
\begin{verbatim}
$counter1(j,n): TRUE
\end{verbatim}
 when $j$ is the starting position of $\operatorname{counter}_1(n)$, 
 and similarly for the remaining functions among \eqref{counter1}--\eqref{counter4}. We have that 
\begin{verbatim}
$counterany(j,n): TRUE
\end{verbatim}
 when $j$ is the starting position of any
 expression among \eqref{counter1}--\eqref{counter4}. 
 Similarly, we find that 
\begin{verbatim}
$counter1or4(j,n): TRUE
\end{verbatim}
 when $j$
 is the starting position of
 $\operatorname{counter}_1(n)$ or $\operatorname{counter}_4(n)$. 
 Finally, the {\tt Walnut} system is such that 
\begin{verbatim}
$case10: 
\end{verbatim}
 verifies the claim in the case whereby $b_1b_2=10$, returning {\tt TRUE}, and 
\begin{verbatim}
$case11: 
\end{verbatim}
 verifies the claim in the case such that $b_1b_2=11$, returns {\tt TRUE}. 
\end{proof}

\begin{example}
 Let $n = 58$, with $n - 3 = [110111]_2$, with $m = 6$ according to the notation in 
 \eqref{nminus3}. Since $b_1b_2 = 11$, 
 there should be two distinct $2$-sets each 
 consisting of length-58 subwords $w$ and $v$ of $\text{{\bf t}}$ 
 such that $w \neq v$ and $v \sim_{\textsf{t}} w$ 
 and such that no member of one such $2$-set is tortoise-equivalent to any member of the other $2$-set. 
 In this direction, 
 we let 
\begin{equation}\label{TMw1}
 w^{(1)} = \text{{\bf t}}_{126} \text{{\bf t}}_{127} \cdots \text{{\bf t}}_{183}, 
\end{equation}
 with 
 $$w^{(1)} = 0110010110011010010110100110010110011010011001011010010110. $$
 We then let 
\begin{equation}\label{TMw2}
 w^{(2)} = \text{{\bf t}}_{190} \text{{\bf t}}_{191} \cdots \text{{\bf t}}_{247}, 
\end{equation}
 with 
$$
 w^{(2)} = 0101101001100101101001011001101001100101100110100101101001, 
$$
 noting that the Thue--Morse subwords defined in 
 \eqref{TMw1} and \eqref{TMw2} are distinct 
 and are not tortoise-equivalent. We then set 
\begin{equation*}
 v^{(1)} = \text{{\bf t}}_{62} \text{{\bf t}}_{63} \cdots \text{{\bf t}}_{119}, 
\end{equation*}
 as a companion to \eqref{TMw1}, with 
$$ v^{(1)} = 1010010110011010010110100110010110011010011001011010010110, $$
 noting that $w^{(1)} \neq v^{(1)}$ and that $w^{(1)} \sim_{\textsf{t}} v^{(1)}$. 
 Similarly, we set 
\begin{equation*}
 v^{(2)} = \text{{\bf t}}_{94} \text{{\bf t}}_{95} \cdots \text{{\bf t}}_{151}, 
\end{equation*}
 as a companion to \eqref{TMw2}, 
 with 
$$ v^{(2)} = 1001101001100101101001011001101001100101100110100101101001, $$
 noting that 
 $w^{(2)} \neq v^{(2)}$ and that $w^{(2)} \sim_{\textsf{t}} v^{(2)}$. 
 It may be verified that out of 
 the $\rho_{\text{{\bf t}}}(58) = 178$ length-58 subwords of $\text{{\bf t}}$, 
 if any such subword $x$ is such that there exists a subword $y$ of $\text{{\bf t}}$
 such that $x \neq y$ and $x \sim_{\textsf{t}} y$, 
 then $x$ and $y$ are both in $\{ w^{(1)}, w^{(2)}, v^{(1)}, v^{(2)} \}$. 
 Moreover for $m$ as above, we see that the first $\text{{\bf t}}$-index on the right of 
 \eqref{TMw1} is $2^{m+1} -2$, so that 
 $w^{(1)} = \operatorname{counter}_1(n)$, 
 and the first $\text{{\bf t}}$-index on the right of \eqref{TMw2} 
 is $2^{m+1} + 2^m -2$, 
 so that 
 $w^{(2)} = \operatorname{counter}_1(n)$, and this agrees with 
 Theorem \ref{mainTM}. 
\end{example}

\begin{theorem}\label{evalTMtort}
 For all $n \geq 10$, we have that 
\begin{multline*}
 \rho_{\text{{\bf t}}}^{\emph{\textsf{t}}}(n) = \\ \begin{cases} 
 \rho_{\text{{\bf t}}}(n) - 4 & \text{if the first two binary digits of $n-3$ are $1$ and $0$, and} \\
 \rho_{\text{{\bf t}}}(n) - 2 & \text{otherwise.} 
 \end{cases}
\end{multline*}
\end{theorem}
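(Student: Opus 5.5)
The plan is to combine Theorem~\ref{TM_eq_cl} with Lemma~\ref{lem_dLV}. The goal is to show that the number of non-singleton $\sim_{\textsf{t}}$-classes among length-$n$ subwords of $\text{{\bf t}}$ is exactly $4$ or $2$. Each such class should have size exactly $2$, so that $\rho_{\text{{\bf t}}}^{\textsf{t}}(n) = \rho_{\text{{\bf t}}}(n) - (\text{number of two-element classes})$.

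\textbf{Step 1: classes have size at most $2$ and correspond to left special factors.} For $n \geq 10$ (and so $n\ge 9$), Theorem~\ref{TM_eq_cl} says that any two distinct equivalent subwords form the pair $\{01y,10y\}$ with $y$ left special of length $n-2$. A class of size $3$ would force three words all of the form $\{01y,10y\}$ with a common $y$, which is impossible. So each non-singleton class is exactly $\{01y,10y\}$.

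\textbf{Step 2: a bijection with left special factors of length $n-2$.} In the other direction, fix a left special factor $y$ of length $n-2 \geq 8$. I need to check that both $01y$ and $10y$ occur in $\text{{\bf t}}$, and that they are tortoise-equivalent. Equivalence is immediate, since $\textsf{tortoise}(01y)=\textsf{tortoise}(10y)=0y1$ when $y$ contains no leading run issue. More carefully, tortoise moves the first $1$ to the end, so both give $0y'$ with the same tail.

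Occurrence takes more work. Since $y$ is left special, both $0y$ and $1y$ occur. Because $|y|\ge 5$, all occurrences of $y$ have the same parity, by the argument in the proof of Theorem~\ref{TM_eq_cl}. If $y$ sits at even positions, then the preceding letter pair is $01$ or $10$ according to the preceding letter, so both $01y$ and $10y$ occur. If $y$ sits at odd positions, then $y$ starts with the second letter of a block, and the extensions are $a\bar a$ followed by $y$ after a block. In that case I expect the correspondence instead to be with $y=0z$ as in Case~2 of that proof. This is the delicate point: I would handle it by showing that the set of left special $y$ of length $n-2$ maps injectively onto pairs $\{01y,10y\}$ of occurring equivalent words. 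Distinct $y$ give distinct pairs, since $y$ is the common suffix. Hence the number of two-element classes equals the number of left special factors of length $n-2$.

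\textbf{Step 3: counting.} By Lemma~\ref{lem_dLV} with length $N=n-2$, the count is $4$ if $n-2 \le 3\cdot 2^{\lfloor\log_2(n-3)\rfloor-1}$ and $2$ otherwise. Write $n-3=[b_1\cdots b_m]_2$, so that $2^{m-1}\le n-3<2^m$. The condition becomes $n-3 < 3\cdot 2^{m-2}$, which holds exactly when $b_2=0$, i.e.\ when the leading digits are $10$. This yields $\rho_{\text{{\bf t}}}(n)-4$ in that case and $\rho_{\text{{\bf t}}}(n)-2$ otherwise. As a sanity check, this matches Theorem~\ref{mainTM}, which allows four counterexample representatives versus two.

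The main obstacle is Step~2: showing that every left special $y$ of length $n-2$ really yields occurring words $01y$ and $10y$, so that the counts agree exactly rather than just giving an upper bound. If the parity argument proves awkward, the fallback is a Walnut verification in the style of Theorem~\ref{mainTM}. One would define left specialness, quantify over lengths with the \texttt{starts10}/\texttt{starts11} predicates, and assert that the number of distinct starting classes $j$ with $01y,10y$ both occurring is $4$ or $2$, by checking that the counterexample positions give pairwise inequivalent non-singleton classes.
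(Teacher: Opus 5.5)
Your plan is the same as the paper's one-line argument (Theorem~\ref{TM_eq_cl} plus Lemma~\ref{lem_dLV}), and Steps~1 and~3 are correct. Your translation of $n-2\le 3\cdot 2^{\lfloor\log_2(n-3)\rfloor-1}$ into $b_1b_2=10$ is right. The ``leading run'' caveat is unnecessary, since $\textsf{tortoise}(01y)=\textsf{tortoise}(10y)=0y1$ for every $y$.

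The gap is in Step~2. This is exactly the direction needed for equality: without it you only get $\rho^{\textsf{t}}_{\mathbf t}(n)\ge \rho_{\mathbf t}(n)-L(n-2)$, where $L(n-2)$ is the number of left special factors of length $n-2$. You must show that every left special $y$ of length $n-2$ has both $01y$ and $10y$ occurring in $\mathbf t$. You prove this only when $y$ sits at even positions. For the odd case you offer injectivity of $y\mapsto\{01y,10y\}$, but injectivity is trivial and says nothing about whether the two words actually occur. Your appeal to ``$y=0z$ as in Case~2'' also mixes up roles: in that case it is $z$ that sits at an odd position, while $y=0z$ sits at an even one. Theorem~\ref{mainTM}, which the paper also cites, cannot fill this hole. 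It only says every non-trivial class contains one of the listed words, which is again an upper bound.

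The missing idea is one line: the odd case is vacuous. Suppose $|y|\ge 5$ and $y$ occurs at an odd position. Then its first letter is the second letter of a block $a\bar a$, so the letter preceding $y$ is the complement of $y$'s first letter. Since every occurrence has the same parity, this holds at every occurrence, so $y$ is not left special. Hence every left special $y$ of length $n-2\ge 8$ occurs only at even positions. Your even-case argument then gives both $01y$ and $10y$, and $y\mapsto\{01y,10y\}$ is a bijection onto the non-singleton classes. This completes the proof without any Walnut fallback.

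The hypothesis $n\ge 10$ is genuinely needed for your use of Theorem~\ref{TM_eq_cl}. At $n=9$, the words $\mathbf t_3\cdots\mathbf t_{11}=010011001$ and $\mathbf t_{17}\cdots\mathbf t_{25}=001011001$ are tortoise-equivalent but not of the form $\{01y,10y\}$. This is why $\rho^{\textsf{t}}_{\mathbf t}(9)=21$ rather than $22$.
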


\begin{proof}
 This follows in a direct way from Lemma~\ref{lem_dLV} and Theorems~\ref{TM_eq_cl} and~\ref{mainTM}.
\end{proof}

 Similar relations can be obtained for $\rho_{\text{{\bf t}}}^{\textsf{t}^{(k)}}$, and we encourage explorations of this, 
 along with the topics for future research covered in Section \ref{secConclusion} below. 

\section{Conclusion}\label{secConclusion}
 We have disregarded the enumeration of subwords up to equivalence by Defant and Kravitz's $\textsf{hare}$ operation, i.e., since we 
 have been dealing with binary sequences and since \eqref{equivbin} holds for a given binary word $b$. We encourage a full 
 exploration of hare-equivalence (by analogy with Definition \ref{ktorteq}) for non-binary sequences. 

 Theorems \ref{evalpapertort} and \ref{evalTMtort} raise questions as to the relationships between the sequences 
 $\big( \rho_{\text{{\bf x}}}^{\textsf{t}}(n) : n \in \mathbb{N}_{0} \big)$ and $\big( \rho_{\text{{\bf x}}}(n) : n \in \mathbb{N}_{0} \big)$
 for a given sequence $\text{{\bf x}}$ over a finite alphabet, 
 especially for the case whereby $\text{{\bf x}}$ is automatic. We encourage a full exploration of this. 
 How can $\rho_{\text{{\bf x}}}^{\textsf{t}}(n)$ be expressed in terms of $\rho_{\text{{\bf x}}}(n)$, 
 for a given automatic sequence $\text{{\bf x}}$ and a given argument $n \in \mathbb{N}_{0}$? 

 From the relation in \eqref{tortsort}, we have that expressions of the form 
\begin{equation}\label{nearlyab}
 \textsf{tortoise}^{\left( \langle w \rangle_{\textsf{tortoise}} - 1 \right) }(w), 
\end{equation}
 are as close as possible to being sorted, with respect to the $\textsf{tortoise}$ operation. This motivates a full study of ``nearly abelian'' 
 complexity functions, whereby subwords are enumerated up to an equivalence relation defined via \eqref{nearlyab}, by analogy with 
 tortoise complexity functions. What are the nearly abelian complexity functions for $\text{{\bf t}}$ and $\text{{\bf f}}$? 
 
 What would be appropriate as generalizations or variants based on tortoise complexity functions of Sturmian sequences? 
 
 Returning to \eqref{desireabel}, the sequence of expressions of the form $\operatorname{abel}_{\text{{\bf f}}}(k)$ for $k = 1, 2, \ldots$ 
 begins with $$ 2, 3, 5, 6, 9, 10, 11, 14, 17, 18, 19, 21, 22, 25, 27, \ldots $$ and this does not currently agree with any OEIS sequences. 
 How can this sequence be evaluated? How can a recursion for the sequence be determined? What can be said in regard to the growth 
 properties or first differences of this sequence? How can the sequence of arguments $k$ such that 
 $\operatorname{abel}_{\text{{\bf f}}}(k+1) - \operatorname{abel}_{\text{{\bf f}}}(k) = 1$ be evaluated? One may consider the same 
 questions applied to $\operatorname{abel}_{\text{{\bf t}}}$. 

\subsection*{Acknowledgements}
 The authors thank Jean-Paul Allouche for useful discussions related to this paper. 
 The second author is funded by NSERC Discovery Grant RGPIN-2025-04076.

 \ 

{\textsc{John M. Campbell}} 

\vspace{0.1in}

Department of Mathematics and Statistics 

Dalhousie University

Halifax, Nova Scotia, B3H 4R2, Canada 

\vspace{0.1in}

{\tt jh241966@dal.ca}

 \ 

{\textsc{Narad Rampersad}}

\vspace{0.1in}

Department of Mathematics and Statistics

 University of Winnipeg

 Winnipeg, Manitoba, R3B 2E9, Canada

\vspace{0.1in}

{\tt n.rampersad@uwinnipeg.ca}

\end{document}